\documentclass[12pt, oneside, reqno]{amsart}
\pdfoutput=1
\usepackage{amsfonts}
\usepackage{amssymb}
\usepackage{amsmath}
\usepackage{amsrefs}
\usepackage{tikz-cd}
\usepackage{centernot}
\usepackage{enumerate}
\usepackage{fullpage}
\usepackage{multicol}
\usepackage{amsthm,amsmath,amssymb,tikz,tikz-cd,amsmath,mathrsfs,mathtools,multicol,dirtytalk,tabularx,xy,lipsum,url,enumitem,cmll}
\usepackage{float}

\newtheorem{thm}{Theorem}[section]
\newtheorem{lem}[thm]{Lemma}
\newtheorem{cor}[thm]{Corollary}
\newtheorem{prop}[thm]{Proposition}

\theoremstyle{definition}
\newtheorem{defn}[thm]{Definition}

\theoremstyle{remark}

\newtheorem{ex}[thm]{Example}

\newcommand{\ts}{\textsc}
\newcommand{\sleq}{\sqsubseteq}
\newcommand{\sj}{\sqcup}
\newcommand{\sm}{\sqcap}

\author{John Harding}
\address{New Mexico State University, Las Cruces NM 88003, USA}
\email{jharding@nmsu.edu}

\author{Gejza Jen\v{c}a}
\address{
Department of Mathematics and Descriptive Geometry, Faculty
of Civil Engineering, Slovak University of Technology,
Radlinského 11, Bratislava, 810 05, Slovakia.}
\email{gejza.jenca@stuba.sk}

\author{Bert Lindenhovius}
\address{Institute of Mathematical Methods in Medicine and Data Based Modeling,
Johannes Kepler University Linz, Altenberger Strasse 69 4040 Linz, Austria}
\email{albertus.lindenhovius@jku.at}

\thanks{The second listed author was partially supported by NSF grant DMS-2231414. 
}

\begin{document}

\title{From orthoposets to orthomodular posets}

\begin{abstract}
We show that the category of orthomodular posets is a full coreflective subcategory of the category of strong orthoposets, those orthoposets in which any two orthogonal elements have a join. This coreflection is obtained by building from a strong orthoposet~$P$, an orthomodular poset with the same underlying set and same orthocomplementation as~$P$, but with modified order. This coreflector restricts to a functor from the category of ortholattices to the category of orthomodular posets, and this functor is right adjoint. 
  
\par
\vspace{.2cm}
\noindent \textbf{Mathematics Subject Classification (2020):} 06C15; 81P10; 03G12.       
\par
\vspace{.2cm}
\noindent \textbf{Keywords:} Ortholattice, orthomodular poset, coreflection, adjunction.     \end{abstract}

\maketitle

\section{Introduction}

Every orthocomplemented poset (\ts{op}) is the set-theoretic union of its Boolean subalgebras. Orthomodular posets (\ts{omp}s) are those whose order is the union of the orderings of their Boolean subalgebras. In this note we show that from an \ts{op} $P$ in which any two orthogonal elements have a join, called here a strong orthoposet (\ts{sop}), we can construct an \ts{omp} on the same underlying set as $P$ whose ordering is the union of the orderings of the Boolean subalgebras of $P$. This construction provides a coreflector from the category of \ts{sop}s to its full subcategory of \ts{omp}s that restricts to a right adjoint functor from the category of ortholattices (\ts{ol}s) to \ts{omp}s. 

In the second section we provide preliminaries. In the third section we establish this construction, and in the fourth section we consider the scope of this construction and investigate its basic properties. The final section provides the categorical view. For general background on \ts{ol}s and \ts{omp}s, see \cite{Kalmbach,PtakPulmannova}.

\section{Preliminaries}

Throughout this note, we will be concerned with joins and meets of a subset of a bounded poset $P$. Of course, these do not always exist. For $A\subseteq P$, we will say that $A$ has a join if there is a least upper bound of $A$ in $P$, and similarly for meets. In this case, we write $\bigvee A$ for its join, and $\bigwedge A$ for its meet. When $A$ consists of two elements $x,y$ we use $x\vee y$ and $x\wedge y$ for these. We often write $\bigvee A=x$ or $\bigwedge A = y$ to indicate both  that $A$ has a join or meet, and to also give the value of this join or meet. In particular, if we write $x\vee y =1$, we mean that $x,y$ have a least upper bound in the poset and that the value of this least upper bound is 1. 
 
\begin{defn}
An \emph{orthocomplemented poset} (\ts{op}) is a bounded poset $P$ with a unary operation~$'$ that is order-inverting, period two, and for each $x\in P$ we have $x\vee x'=1$. 
\end{defn}

In an \ts{op} $P$, the operation $'$ is an isomorphism between $P$ and its dual, so we have versions of De Morgan's laws:

\begin{lem}
If $P$ is an orthocomplemented poset, $x\in P$, and $A\subseteq P$, then 
\vspace{1ex} 

\begin{enumerate}
\item if $\bigvee A=x$, then $\bigwedge \{a'\mid a\in A\}=x'$;
\item if $\bigwedge A=x$, then $\bigvee \{a'\mid a\in A\}=x'$. 
\end{enumerate}
\vspace{1ex}

\noindent In particular, $x\wedge x'=0$ for all $x\in P$. 
\end{lem}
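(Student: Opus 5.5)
The plan is to use the fact, noted just before the statement, that $'$ is an order-isomorphism from $P$ onto its dual. The two ingredients are that $'$ is order-inverting and that it is an involution ($x''=x$). Together they give, for all $a,b\in P$, the equivalence $a\le b \iff b'\le a'$. The forward direction is order-inversion. For the converse, apply order-inversion to $b'\le a'$ to get $a''\le b''$, that is, $a\le b$.

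For (1), I will assume $\bigvee A=x$ and check the two defining properties of a meet for the set $\{a'\mid a\in A\}$ with value $x'$. First, $x'$ is a lower bound: each $a\in A$ satisfies $a\le x$, hence $x'\le a'$. Second, $x'$ is the greatest lower bound. Suppose $y\le a'$ for all $a\in A$. Then $a=a''\le y'$ for every $a\in A$, so $y'$ is an upper bound of $A$. Since $x$ is the least upper bound, $x\le y'$, and applying $'$ gives $y=y''\le x'$. Hence $\bigwedge\{a'\mid a\in A\}=x'$.

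Statement (2) is the order-dual of (1) and is proved by the same argument with the inequalities reversed. Alternatively, it can be derived from (1). Put $B=\{a'\mid a\in A\}$, so that $\{b'\mid b\in B\}=A$. If $\bigwedge A=x$, I would run the dual argument: every upper bound $y$ of $\{a'\}$ gives a lower bound $y'$ of $A$, so $y'\le x$ and hence $x'\le y$.

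For the final claim, apply (1) with $A=\{x,x'\}$. The definition of an \ts{op} gives $x\vee x'=1$, so (1) yields $x'\wedge x''=1'$. Since $x''=x$, this reads $x\wedge x'=1'$. It remains to see that $1'=0$. For every $y$ we have $y'\le 1$, hence $1'\le y''=y$, so $1'$ is the least element, namely $0$.

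No step is a real obstacle; the only care needed is to use the involution $x''=x$ to reverse inequalities in both directions, and to identify $1'=0$.
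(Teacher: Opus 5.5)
Your proof is correct and takes the same approach as the paper, which justifies the lemma only by noting that $'$ is an order-isomorphism between $P$ and its dual. You have written out the meet/join verifications and the identification $1'=0$ that the paper leaves implicit.
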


Two elements $x,y$ in an orthocomplemented poset are \emph{orthogonal}, written $x\perp y$, if $x\leq y'$. We next define various classes of \ts{op}s that will be considered throughout this note. 

\begin{defn}
Let $P$ be an \ts{op}. We say 
\begin{enumerate}
\item $P$ is a strong orthoposet (\ts{sop}) if any two orthogonal elements have a join;
\item $P$ is an orthomodular poset (\ts{omp}) if it is an \ts{sop} and $x\leq y\Rightarrow y=x\vee(x'\wedge y)$;
\item $P$ is an ortholattice (\ts{ol}) if any two elements have a join;
\item $P$ is an orthomodular lattice (\ts{oml}) if it is an \ts{omp} and an \ts{ol}.
\end{enumerate}
\end{defn}
\vspace{1ex}

\begin{center}
\begin{tikzpicture}[thick, font=\sffamily, xscale=.8, yscale=.7]

\draw (0,-.4) rectangle (10,6.5);
\node[anchor=north west] at (0.2,6.3) {\ts{op}};

\draw (5,3) ellipse (4.6cm and 2.8cm);
\node[anchor=north] at (5,5.6) {\ts{sop}};

\draw (3.8,2.7) ellipse (2.8cm and 1.8cm);
\node at (2.2,2.7) {\ts{omp}};

\draw (6.2,2.7) ellipse (2.8cm and 1.8cm);
\node at (7.8,2.7) {\ts{ol}};

\node at (5,2.7) {\ts{oml}};

\end{tikzpicture}
\end{center}
\vspace{1ex}

\begin{defn}
For an \ts{op} $P$, a set $B\subseteq P$ is a Boolean subalgebra of $P$ if $B$ is closed under orthocomplementation and contains $0,1$; any two elements of $B$ have a join and meet in $P$ and these belongs to $B$; and $B$ is a Boolean algebra under the operations inherited from $P$. 
\end{defn}

We develop basic properties of Boolean subalgebras of \ts{op}s.

\begin{lem} \label{lem: tech}
Let $P$ be an \ts{op} and $x\leq y$ elements of $P$. These are equivalent.  
\begin{enumerate}
\item $x,y$ belong to a Boolean subalgebra of $P$;
\item $x,y$ belong to a Boolean subalgebra of $P$ with at most 8 elements;
\item $y=x\vee(x'\wedge y)$ and $x'=y'\vee(x'\wedge y)$.
\end{enumerate}
\vspace{1ex}

\noindent In the final condition, it is required that the indicated joins and meets in the expressions exist and that the indicated equalities hold. 
\end{lem}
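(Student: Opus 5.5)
We prove (1)$\Rightarrow$(3)$\Rightarrow$(2)$\Rightarrow$(1). The implication (2)$\Rightarrow$(1) is trivial.

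\emph{(1)$\Rightarrow$(3).} Let $B$ be a Boolean subalgebra of $P$ containing $x\le y$. Since $B$ is closed under $'$ and under the binary meets and joins of $P$, the elements $x'\wedge y$, $x\vee(x'\wedge y)$ and $y'\vee(x'\wedge y)$ all exist in $P$ and lie in $B$. Moreover, these joins and meets computed in $P$ coincide with the Boolean operations of $B$. In the Boolean algebra $B$ we have $x\le y$, so distributivity gives
\[
x\vee(x'\wedge y)=(x\vee x')\wedge(x\vee y)=y .
\]
Also $y'\le x'$, so similarly $y'\vee(x'\wedge y)=(y'\vee x')\wedge(y'\vee y)=x'$. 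These Boolean equalities hold in $P$ because the operations agree.

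\emph{(3)$\Rightarrow$(2).} Put $a=x$, $b=x'\wedge y$ and $c=y'$. We check that they are pairwise orthogonal:
\begin{itemize}
\item $a\le y\le b'$ fails as a direct argument, so instead use $b\le x'=a'$, which gives $a\perp b$;
\item $b\le y=c'$, which gives $b\perp c$;
\item $a\le y=c'$, which gives $a\perp c$.
\end{itemize}
The hypotheses give $a\vee b=y$ and $c\vee b=x'$. By the De Morgan lemma, $x'\vee y'=y'$, and so on.

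Candidate subalgebra: $B=\{0,a,b,c,a\vee b,a\vee c,b\vee c,1\}$, which we aim to show is an 8-element (possibly degenerate) Boolean algebra with atoms $a,b,c$. We need three facts.
\begin{itemize}
\item $a\vee b=y=c'$.
\item $b\vee c=x'=a'$.
\item $a\vee c$ exists and equals $b'$. Since $x\le y$, we have $x\vee y'$... To get this, apply De Morgan to $y=x\vee b$, which gives $y'=x'\wedge b'$. Apply it also to $x'=y'\vee b$, which gives $x=y\wedge b'$.
\end{itemize}
For $a\vee c=b'$, first note $a,c\le b'$. Now suppose $u\ge a,c$. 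Then $u'\le a'=x'$ and $u'\le c'=y$, so $u'$ is a lower bound of $x'$ and $y$; hence $u'\le x'\wedge y=b$, i.e.\ $u\ge b'$. Thus $b'$ is the least upper bound of $a$ and $c$.

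Hence the complements of the atoms are the pairwise joins:
\[
c'=a\vee b,\qquad a'=b\vee c,\qquad b'=a\vee c .
\]
It remains to check that every pair of elements of $B$ has its join and meet in $P$ lying in $B$, matching the Boolean algebra $2^3$. The main obstacle is verifying the joins such as $a\vee(b\vee c)=1$ and $(a\vee b)\vee(a\vee c)=1$, together with the meets like $(a\vee b)\wedge(a\vee c)=a$. Joins of an element with its complement equal $1$ by the \ts{op} axiom. For $(a\vee b)\vee(a\vee c)=c'\vee b'$, any upper bound $u$ satisfies $u'\le c\wedge b$; since $c\le b'$, we get $c\wedge b\le b'\wedge b=0$, so $u=1$. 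Meets then follow by De Morgan, e.g.\ $c'\wedge b'=(c\vee b)'=a$.

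Degenerate cases, where some of $a,b,c$ equal $0$ or coincide, just collapse $B$ to a smaller Boolean algebra. Any coincidences force zeros by orthogonality: if $a=b$ with $a\perp b$, then $a\le a'$, so $a=a\wedge a'=0$. Thus $B$ is a Boolean subalgebra with at most 8 elements containing $x=a$ and $y=a\vee b$.
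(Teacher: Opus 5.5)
Your proof is correct and is essentially the paper's. The set $\{0,a,b,c,a\vee b,a\vee c,b\vee c,1\}$ built from the pairwise orthogonal elements $a=x$, $b=x'\wedge y$, $c=y'$ is exactly the paper's $B=\{0,x,y',x'\wedge y,x',y,x\vee y',1\}$, and the three nontrivial joins you verify are the paper's three exceptional pairs. Your handling of degenerate cases amounts to the paper's remark that $B$ is a homomorphic image of the 8-element Boolean algebra, and routing (1)$\Rightarrow$(2) through (3), rather than through ``two comparable elements generate at most 8 elements'', is only a cosmetic difference.

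One harmless slip: ``$x'\vee y'=y'$'' should read $x'\vee y'=x'$, but it is never used.
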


\begin{proof}
Since a Boolean algebra generated by two comparable elements has at most 8 elements, (1) $\Leftrightarrow$ (2). (1) $\Rightarrow$ (3) since the conditions hold in any Boolean algebra, and in a Boolean subalgebra of $P$, all binary joins and meets and the orthocomplements are those of $P$. For (3) $\Rightarrow$ (2), let $B=\{0,x,y',x'\wedge y,x',y,x\vee y',1\}$. Note that the existence of $x'\wedge y$ is given by the third condition, and hence by De Morgan's law so is the existence of $x\vee y'$. The set $B$ has at most 8 elements, contains $0,1$ and by DeMorgan's laws is closed under orthocomplementation. With three exceptions, every pair of elements from $B$ is of the form $\{u,v\}$ where $u\leq v$ or $u'\leq v$, and any such pair has a join in $P$ and that join belongs to $B$. The three exceptions are $\{x,y'\}$, $\{x,x'\wedge y\}$, and $\{y',x'\wedge y\}$. Our assumptions imply their join exists in $P$ and belongs to $B$. Then all joins and meets in $B$ are as indicated in the diagram below, and it follows that $B$ is a homomorphic image of an 8-element Boolean algebra.  
\end{proof}

\begin{center}
\begin{tikzpicture}[thick, font=\sffamily, x=1.8cm, y=1.5cm, dot/.style={circle, fill=black, inner sep=2pt}]

\node[dot] (0) at (0,0) {};
\node[below=4pt] at (0) {$0$};

\node[dot] (x) at (-1,1) {};
\node[left=4pt] at (x) {$x$};
    
\node[dot] (a1) at (0,1) {};
\node at (0.4,.9) {$x'\wedge y$};
    
\node[dot] (a2) at (1,1) {};
\node[right=4pt] at (a2) {$y'$};

\node[dot] (y) at (-1,2) {};
\node[left=4pt] at (y) {$y$};
    
\node[dot] (c1) at (0,2) {};
\node at (0.4,2.1) {$x\vee y'$};
    
\node[dot] (c2) at (1,2) {};
\node[right=4pt] at (c2) {$x'$};

\node[dot] (1) at (0,3) {};
\node[above=4pt] at (1) {$1$};

\draw (0) -- (x); \draw (0) -- (a1); \draw (0) -- (a2);
\draw (x) -- (y); \draw (x) -- (c1);
\draw (a1) -- (y); \draw (a1) -- (c2);
\draw (a2) -- (c1); \draw (a2) -- (c2);
\draw (y) -- (1); \draw (c1) -- (1); \draw (c2) -- (1);
\end{tikzpicture}
\end{center}

\begin{cor}
\label{cor: char OMP}
For an \ts{op} $P$, these are equivalent
\vspace{1ex}

\begin{enumerate}
\item $P$ is an \ts{omp};
\item If $x,y\in P$ and $x\leq y$, then $x,y$ belong to a Boolean subalgebra of $P$.
\end{enumerate}
\vspace{1ex}

\noindent Thus, an \ts{op} is an \ts{omp} iff its ordering is the union of the orderings of its Boolean subalgebras. 
\end{cor}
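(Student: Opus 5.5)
The plan is to reduce both directions to Lemma~\ref{lem: tech}. Its condition (3) asks for the orthomodular identity $y=x\vee(x'\wedge y)$ together with the dual identity $x'=y'\vee(x'\wedge y)$. So the key step is to check that in an \ts{omp} this second identity comes for free from the first.

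For (1) $\Rightarrow$ (2), let $P$ be an \ts{omp} and $x\leq y$. By the definition of \ts{omp}, $x'\wedge y$ exists and $y=x\vee(x'\wedge y)$.

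For the second identity, apply the orthomodular law to the pair $y'\leq x'$. This gives that $y\wedge x'$ exists (which we already know, as it is the same element) and $x'=y'\vee(y''\wedge x')=y'\vee(y\wedge x')$. That is exactly the second equation in condition (3). Lemma~\ref{lem: tech} then puts $x,y$ in a Boolean subalgebra.

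For (2) $\Rightarrow$ (1), assume every comparable pair lies in a Boolean subalgebra. There are two things to verify.
\begin{enumerate}
\item The orthomodular law follows at once from (1) $\Rightarrow$ (3) of Lemma~\ref{lem: tech}.
\item To see $P$ is an \ts{sop}, take $x\perp y$, i.e.\ $x\leq y'$. By hypothesis $x,y'$ lie in a Boolean subalgebra $B$. Since $B$ is closed under $'$, we have $y\in B$. By the definition of Boolean subalgebra, $x\vee y$ exists in $P$.
\end{enumerate}

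For the final sentence of the statement, note that the ordering of each Boolean subalgebra is the restriction of the order of $P$. Hence the union of these orderings is always contained in $\leq$. Equality of the two is precisely condition (2).

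There is no real obstacle here. The only point needing care is the symmetric application of the orthomodular law to $y'\leq x'$, which supplies the second equation of Lemma~\ref{lem: tech}(3).
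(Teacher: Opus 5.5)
Your proof is correct and follows the paper's route: both directions go through Lemma~\ref{lem: tech}, and the second identity $x'=y'\vee(x'\wedge y)$ comes from applying the orthomodular law to $y'\leq x'$. Your explicit check that orthogonal joins exist in (2) $\Rightarrow$ (1) is a detail the paper leaves implicit. It also follows from the fact that the orthomodular identity requires $x'\wedge y$ to exist for every $x\leq y$.
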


\begin{proof}
(1) $\Rightarrow$ (2) If $x\leq y$ in an \ts{omp}, then $x\vee(x'\wedge y)=y$ by the definition of an \ts{omp}. But $y'\leq x'$, so $x'=y'\vee(x'\wedge y)$ by the definition of an \ts{omp}. So by Lemma~\ref{lem: tech}, $x,y$ lie in a Boolean subalgebra of $B$. (2) $\Rightarrow$ (1) If $x\leq y$, then $x\vee (x'\wedge y)=y$ since they lie in a Boolean subalgebra.     
\end{proof}

\section{The main construction}

\begin{defn}
For $P=(P,\leq,\,',0,1)$ an \ts{op}, let $G(P)=(P,\sleq,\,',0,1)$ where $x \sleq y$ iff $x\leq y$ and $x,y$ belong to a Boolean subalgebra of $P$.
\end{defn}

\begin{thm} \label{thm: main}
If $P$ is a strong orthoposet, then $G(P)$ is an orthomodular poset.
\end{thm}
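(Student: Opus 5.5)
The plan is to check the axioms of an \ts{omp} for $G(P)$ one at a time. Orthomodularity itself will come from Corollary~\ref{cor: char OMP}. Throughout I will use Lemma~\ref{lem: tech} to replace ``$x\sleq y$'' by the concrete condition $x\le y$, $y=x\vee(x'\wedge y)$, $x'=y'\vee(x'\wedge y)$. Equivalently, $x\sleq y$ says that $x$, $a:=x'\wedge y$ and $y'$ are pairwise orthogonal, with $x\vee a=y$ and $a\vee y'=x'$.

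\textbf{Easy properties.} $\sleq$ is reflexive, because $\{0,x,x',1\}$ is a Boolean subalgebra; this uses $x\vee x'=1$ and $x\wedge x'=0$. It is antisymmetric because $\le$ is. The elements $0$ and $1$ are the bounds, since $0,1,x,x'$ always lie in a common Boolean subalgebra. The map $'$ is order-inverting and of period two for $\sleq$, because Boolean subalgebras are closed under $'$.

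\textbf{Transitivity, the main obstacle.} Suppose $x\sleq y\sleq z$ and set $a=x'\wedge y$ and $b=y'\wedge z$. Then $x,a,b,z'$ are pairwise orthogonal in $P$: for instance $a\le y\le b'$, and $x\le y\le z$. We also have $x\vee a=y$, $y\vee b=z$, $a\vee y'=x'$ and $b\vee z'=y'$.

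By the \ts{sop} property, $a\vee b$ and $x\vee z'$ exist. The aim is to show that
\[
\{\,\textstyle\bigvee S : S\subseteq\{x,a,b,z'\}\,\}
\]
is a 16-element (possibly degenerate) Boolean subalgebra of $P$; in particular it gives $x'\wedge z=a\vee b$ and $z=x\vee(a\vee b)$.

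The key computations are the following.
\begin{itemize}
\item $x\vee(a\vee b)=z$: an upper bound of $x,a,b$ is above $x\vee a=y$ and $b$, hence above $z$.
\item $x'=a\vee b\vee z'$, which uses $a\vee y'=x'$ and $y'=b\vee z'$ in the same way.
\item The harder direction: that $a\vee b$ is really the meet $x'\wedge z$.
\end{itemize}
For the last point I would dualise. It suffices to show $(a\vee b)'=x\vee z'$, which I would get by playing the two Boolean subalgebras containing $\{x,y\}$ and $\{y,z\}$ against each other, together with De Morgan's laws. If this direct route stalls, I would instead try to show directly that any $w\le x',z$ lies below $a\vee b$. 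Once the joins are identified, Lemma~\ref{lem: tech} gives $x\sleq z$.

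\textbf{Orthogonal joins and orthomodularity.} Let $x\sleq y'$. Then $x\le y'$, so $x\vee y$ exists in $P$. The elements $x$, $y$ and $(x\vee y)'$ are pairwise orthogonal and span a small Boolean subalgebra, so $x,y\sleq x\vee y$. To show it is the least $\sleq$-upper bound, take $x,y\sleq z$ and argue as in the transitivity step, now with the orthogonal family $x$, $y$, $z'$ and the relevant differences, to get $x\vee y\sleq z$. This also shows $x\vee x'=1$ in $G(P)$.

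Finally, let $x\sleq y$. Then $x,y$ lie in a Boolean subalgebra $B$ of $P$. For any $u,v\in B$ with $u\le v$ we have $u\sleq v$, so the orders of $P$ and $G(P)$ agree on $B$. Joins of elements of $B$ in $G(P)$ coincide with those in $P$: they are $\sleq$-upper bounds, and they are least because every $\sleq$-upper bound is a $\le$-upper bound. Hence $B$ is a Boolean subalgebra of $G(P)$, and Corollary~\ref{cor: char OMP} shows that $G(P)$ is an \ts{omp}.
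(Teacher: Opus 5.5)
\textbf{Transitivity.} The step you flag as the ``harder direction'', $a\vee b=x'\wedge z$, is false in a general \ts{sop}, so the 16-element Boolean subalgebra you aim for need not exist. Here is a counterexample.

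Take the Boolean algebra of subsets of $\{p,q,r,s\}$, writing $pq$ for $\{p,q\}$ and so on. Adjoin two new elements $m$ and $m'$, orthocomplements of each other. Let $m$ lie above exactly $0,p,q$ and below exactly $pq,pqr,pqs,1$. Let $m'$ lie above exactly $0,r,s,rs$ and below exactly $prs,qrs,1$. This gives an 18-element ortholattice $P$, in particular an \ts{sop}. In $P$ we have $p\vee q=m$, and every other join of two subsets is their union.

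Now put $x=r$, $y=pr$, $z=pqr$. Then $x\sleq y\sleq z$, via the Boolean subalgebras with atoms $r,p,qs$ and $pr,q,s$. Here $a=p$ and $b=q$, so $a\vee b=m$. But $x'\wedge z=pqs\wedge pqr=pq\neq m$, and correspondingly $(a\vee b)'=m'\neq rs=x\vee z'$.

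You do not need that identity. By the \ts{sop} property and De Morgan, $x'\wedge z$ exists, and clearly $a\vee b\le x'\wedge z$. Your two correct computations then give
\[ z=x\vee(a\vee b)\le x\vee(x'\wedge z)\le z \quad\text{and}\quad x'=z'\vee(a\vee b)\le z'\vee(x'\wedge z)\le x'. \]
Lemma~\ref{lem: tech} then yields $x\sleq z$. This sandwich argument is exactly the paper's Claims~1--3.

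\textbf{Joins in $G(P)$.} In your last paragraph you argue that a join $u\vee v$ formed in a Boolean subalgebra is least in $(P,\sleq)$ because every $\sleq$-upper bound is a $\le$-upper bound. That only gives $u\vee v\le w$, not $u\vee v\sleq w$. You noticed this one paragraph earlier, where you deferred the needed $x\vee y\sleq z$ to an argument ``as in the transitivity step''.

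The paper's Claim~4 makes the same unjustified inference. The example above shows the hole cannot be filled:
\begin{itemize}
\item $\{0,p,q,m',m,prs,qrs,1\}$ is a Boolean subalgebra of $P$. Hence $p\sleq q'$, so $p\perp q$ in $G(P)$, and also $p,q\sleq m$.
\item $p,q\sleq pqr$, via the subalgebras with atoms $p,qr,s$ and $q,pr,s$.
\item $m\not\sleq pqr$. Indeed $m'\wedge pqr=r$ and $(pqr)'\vee r=s\vee r=rs\neq m'$, which violates Lemma~\ref{lem: tech}(3).
\end{itemize}
A $\sleq$-join of $p$ and $q$ would have to be $\le m$ and $\ge p,q$, hence equal to $m$. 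That is impossible, so the $G(P)$-orthogonal elements $p,q$ have no join in $G(P)$.

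Thus $G(P)$ is not an \ts{sop}, let alone an \ts{omp}. Since $P$ is an \ts{ol}, both the statement and Corollary~\ref{cor: OL} fail as written. Your final two steps cannot be completed without an additional hypothesis.
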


\begin{proof}
Clearly $\sleq$ is reflexive and anti-symmetric, and it follows from the definition that $x\sleq y$ iff $y'\sleq x'$. To show it is transitive, we first establish the following claim. 
\vspace{1ex}

\noindent {\bf Claim 1:} If $x\sleq y$ and $y\sleq z$, then $x\vee(x'\wedge z)=z$.

\begin{proof}[Proof of claim]
Since $P$ is an \ts{sop}, the orthogonal join $x\vee z'$ exists, so by De Morgan's law $x'\wedge z$ exists, and therefore the orthogonal join $x\vee(x'\wedge z)$ exists. Surely $x\vee (x'\wedge z)\leq z$. For the other inequality, note $x'\wedge y$ and $y'\wedge z$ are orthogonal, so their join $(x\wedge y')\vee(y'\wedge z)$ exists, and since $y\leq z$ and $y'\leq x'$ we have $(x'\wedge y)\vee(y'\wedge z)\leq x'\wedge z$. Therefore we have $x\vee[(x'\wedge y)\vee(y'\wedge z)]\leq x\vee(x'\wedge z)$, noting that these joins exist since they are orthogonal joins. Then since $x\vee(x'\wedge y)=y$ we have $y\vee(y'\wedge z)\leq x\vee(x'\wedge z)$, and then since $y\vee(y'\wedge z)=z$, we have $z\leq x\vee(x'\wedge z)$. Thus $x\vee(x'\wedge z)=z$.
\end{proof}

\noindent {\bf Claim 2:} $x\sleq y$ iff $y'\sleq x'$.

\begin{proof}[Proof of claim] 
This follows directly from the definition of $\sleq$ and the fact that $'$ is an orthocomplementation. 
\end{proof}

\noindent {\bf Claim 3:} $x\sleq y$ and $y\sleq z$ imply $x\sleq z$.

\begin{proof}[Proof of claim]
By Claim~1 we have $x\vee(x'\wedge y)=z$. But Claim~2 gives $z'\sleq y'$ and $y'\sleq x'$. Applying Claim~1 to this gives $z'\vee(x'\wedge z)=x'$. So by Lemma~\ref{lem: tech}, we have that $x,z$ lie in a Boolean subalgebra of $P$. Clearly $x\leq z$, so $x\sleq z$. 
\end{proof} 

We have now shown that $\sleq$ is a partial ordering. Since $\{0,x,x',1\}$ is a Boolean subalgebra of $P$ for any $x\in P$, it follows that $0\sleq x$ and $x\sleq 1$ for any $x\in P$. So $(P,\sleq)$ is a bounded poset. Clearly $'$ is period two, and Claim~2 gives that it is order-inverting. Since it is more difficult to be related by $\sleq$ than by $\leq$, we have that $x$ and $x'$ are complements in $(P,\sleq)$. Thus $(P,\sleq,')$ is an \ts{op}. 
\vspace{1ex}

We use $\sm$ and $\sj$ to denote existing binary meets and joins in $(P,\sleq,\,',0,1)$. As usual, we use $x\sm y=z$ and $x\sj y=z$ to indicate both that the given meet or join exists and is equal to the expressed element. 
\vspace{1ex}

\noindent {\bf Claim~4:} If $x,y$ lie in a Boolean subalgebra of $P$, then $x\sm y=x\wedge y$ and $x\sj y=x\vee y$. 

\begin{proof}[Proof of claim]
Suppose $x,y$ are in the Boolean subalgebra $B$ of $P$. Then the join $x\vee y$ exists in $P$ and is their join in $B$. But then $x,y\sleq x\vee y$. Since any upper bound of $x,y$ in $(P,\sleq)$ is also an upper bound in $(P,\leq)$, $x\vee y$ is the least upper bound of $x,y$ in $(P,\sleq)$. Similar comments hold for meets.   
\end{proof}

To show that $G(P)$ is an \ts{omp}, let $x\sleq y$. Then $\{0,x,y,x\vee y',x',y',x'\wedge y,1\}$ is a Boolean subalgebra of $P$. Using Claim~4 repeatedly, $x\sj(x'\sm y)=x\sj(x'\wedge y)=x\vee(x'\wedge y)=y$. This establishes orthomodularity, and concludes the proof of the theorem.  
\end{proof}

\begin{cor} \label{cor: OL}
If $L$ is an \ts{ol}, then $G(L)$ is an \ts{omp}.    
\end{cor}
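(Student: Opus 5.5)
This is immediate from Theorem~\ref{thm: main}, so the plan is only to check that every ortholattice is a strong orthoposet and then apply the theorem.

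First, let $L$ be an \ts{ol}. By definition any two elements of $L$ have a join. In particular, any two orthogonal elements $x\perp y$ have a join $x\vee y$ in $L$. Hence $L$ satisfies the defining condition of an \ts{sop}. This matches the inclusion of \ts{ol} inside \ts{sop} shown in the diagram of the preliminaries.

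Second, since $L$ is a strong orthoposet, Theorem~\ref{thm: main} applies directly and gives that $G(L)=(L,\sleq,\,',0,1)$ is an orthomodular poset.

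No step here is a genuine obstacle. The only point worth flagging is that the definition of an \ts{ol} requires binary joins to exist. Binary meets then also exist by De Morgan's laws, though the \ts{sop} condition only needs the orthogonal joins.
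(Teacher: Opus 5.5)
Your proof is correct and matches the paper's (implicit) argument. The paper states this corollary without proof, because every \ts{ol} is trivially an \ts{sop}: all binary joins exist, in particular the orthogonal ones. Theorem~\ref{thm: main} then applies directly.
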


\section{Properties and limitations of the main construction}

Here we collect several properties of this construction and several examples to show the limitations of its scope. 

\begin{prop}
\label{prop: same Boolean subalgebras}
If $P$ is a \ts{sop}, then $P$ and $G(P)$ have the same Boolean subalgebras, and for elements $x,y$ in a Boolean subalgebra $B$, their join and meet in $B$ agrees with their join and meet in $P$ and with their join and meet in $G(P)$.      
\end{prop}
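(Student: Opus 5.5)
We prove the two inclusions of Boolean subalgebras separately, and read off the statement about joins and meets along the way. Throughout we use that $P$ and $G(P)$ share the underlying set, the bounds $0,1$ and the orthocomplementation $'$, and that $x\sleq y$ implies $x\leq y$.

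\emph{Every Boolean subalgebra of $P$ is one of $G(P)$.} Let $B$ be a Boolean subalgebra of $P$. It is closed under $'$ and contains $0,1$, and both properties are the same in $G(P)$. For $x,y\in B$, Claim~4 in the proof of Theorem~\ref{thm: main} gives $x\sj y=x\vee y$ and $x\sm y=x\wedge y$, and these lie in $B$. So $B$ is closed under the binary joins and meets of $G(P)$, and these operations coincide with those inherited from $P$. Hence $B$, with the operations inherited from $G(P)$, is the same Boolean algebra, and it is a Boolean subalgebra of $G(P)$. This also shows that joins and meets in $B$, in $P$ and in $G(P)$ agree for such $B$. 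Here "join in $B$" makes sense because the order of the Boolean algebra $B$ is $x\le_B y$ iff $x\wedge y=x$, which is just $\leq$ restricted to $B$.

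\emph{Every Boolean subalgebra of $G(P)$ is one of $P$.} This is the direction needing an argument. Let $B$ be a Boolean subalgebra of $G(P)$, and take $x,y\in B$. Put $a=x\sm y$, $b=x\sj y$ and $c=x\sm y'$, all in $B$. Since $B$ is a Boolean algebra, $x=a\sj c$ with $a\sm c=0$, so $a\sleq c'$; similarly $b=x\sj(x'\sm y)$ with $x\sleq (x'\sm y)'$.

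The key observation is that an orthogonal join in $G(P)$ is also the join in $P$. Suppose $u\sleq v'$. Since $G(P)$ is an \ts{omp} by Theorem~\ref{thm: main}, Corollary~\ref{cor: char OMP} places $u,v'$ in a Boolean subalgebra of $G(P)$, hence in a Boolean subalgebra of $P$ because $\sleq$ relations come from Boolean subalgebras of $P$. A cleaner route is this: $u\sleq v'$ means $u,v'$ lie in some Boolean subalgebra $C$ of $P$, and then $v\in C$ too. By Claim~4, $u\sj v=u\vee v$, which is the join in $P$.

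Applying this observation to the decompositions of $x$ and $b$, the joins $x\sj y$ and $x\sm y$ computed in $B$ coincide with joins and meets that exist in $P$. So: every pair of elements of $B$ lies, pairwise and orthogonally decomposed, inside Boolean subalgebras of $P$.

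\emph{The main obstacle} is to show $b=x\vee y$ in $P$, and not merely that $b$ is an upper bound. An upper bound $z$ of $x,y$ in $P$ need not be $\sqsupseteq$-related to them. To handle this we decompose $b$ into the pairwise orthogonal elements $c$, $a$ and $x'\sm y$, which lie in $B$. Pairwise orthogonal elements are pairwise in Boolean subalgebras of $P$ (by the orthogonal case above), so each relevant pair has its join in $P$ given by the same value.

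We then argue as in Claim~1: any $z\geq x,y$ in $P$ satisfies $z\geq x$ and $z\geq x'\sm y$. Since $x\perp x'\sm y$, and the join of $x$ and $x'\sm y$ in $P$ equals their $\sj$-join (by the orthogonal observation), we get $z\geq x\vee(x'\sm y)=b$. So $b=x\vee y$ in $P$, and meets follow by De Morgan's laws.

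With this, $B$ is closed under the joins and meets of $P$, and its operations are those of $P$. So $B$ is a Boolean subalgebra of $P$, and the agreement of joins and meets holds in this direction as well.
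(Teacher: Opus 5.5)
Your proof is correct and is essentially the paper's argument: you show via Claim~4 that orthogonal $\sj$-joins are joins in $P$, then write $x\sj y$ as the orthogonal join $x\sj(x'\sm y)$ (the paper's $x\sj r$ with $r=x'\sm y$) and check that it is the least upper bound of $x,y$ in $P$. The detour through Corollary~\ref{cor: char OMP}, the decomposition $x=a\sj c$, and the three-element decomposition of $b$ are never used and can be dropped.
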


\begin{proof}
Claim~4 in the proof of Theorem~\ref{thm: main} shows that if $B$ is a Boolean subalgebra of $P$, then $B$ is a Boolean subalgebra of $G(P)$. For the converse, we first show 
\vspace{2ex}

\noindent {\bf Claim:} If $x,y$ lie in a Boolean subalgebra of $G(P)$, then $x\wedge y=x\sm y$ and $x\vee y=x\sj y$. 

\begin{proof}[Proof of claim]
Note first that if $p,q$ are orthogonal in $G(P)$, then they are orthogonal in $P$ and since $p\sleq q'$ we have that $p,q$ belong to a Boolean subalgebra of $P$, hence by Claim~4 of Theorem~\ref{thm: main}, $p\vee q=p\sqcup q$. Suppose $x,y$ belong to a Boolean subalgebra of $G(P)$. Then there are $p,q,r$ in $G(P)$ that are pairwise orthogonal and with $p\sj q = x$ and $q\sj r = y$. But by the remark above, $p,q,r$ are also pairwise orthogonal in $P$ and $p\vee q=x$ and $q\vee r= y$. Then $x,r$ are orthogonal in both $G(P)$ and $P$, giving that $x\sj r=x\vee r$ in the sense that both joins exist and are equal. But it is easily seen that $x\sj r=x\sj y$ and $x\vee r = x\vee y$. 
\end{proof}

To conclude the proof, suppose $D$ is a Boolean subalgebra of $G(P)$. Surely $D$ contains $0,1$ and is closed under the common orthocomplementation of $P$ and $G(P)$. To see that $D$ is a Boolean subalgebra of $P$ it is enough to show that if $x,y\in D$, then $x\wedge y$ and $x\vee y$ exist in $P$, and that for $x,y,z\in D$ that $x\wedge(y\vee z)=(x\wedge y)\vee(x\wedge z)$. All of this follows from applications of the claim.  
\end{proof}

The following is an immediate consequence of Corollary~\ref{cor: char OMP}.

\begin{prop}
\label{prop: P OMP iff G(P)=P}
Let $P$ be an \ts{sop}. Then $P$ is an \ts{omp} iff $P=G(P)$.     
\end{prop}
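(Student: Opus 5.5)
Both directions come down to one observation: $P$ and $G(P)$ have the same underlying set, the same orthocomplementation and the same bounds. They can therefore differ only in their orders $\leq$ and $\sleq$. So $P=G(P)$ holds exactly when $\leq$ and $\sleq$ coincide. By definition, $x\sleq y$ means that $x\leq y$ and $x,y$ lie in some Boolean subalgebra of $P$. Hence $\sleq\subseteq\leq$ always holds, and equality of the two orders is equivalent to the statement that every pair $x\leq y$ lies in a Boolean subalgebra of $P$.

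For the forward direction, assume $P$ is an \ts{omp}. By Corollary~\ref{cor: char OMP}, (1)$\Rightarrow$(2), whenever $x\leq y$ the elements $x,y$ belong to a Boolean subalgebra of $P$, so $x\sleq y$. Combined with the trivial inclusion, this gives that $\leq$ and $\sleq$ are the same order, and so $P=G(P)$ as orthoposets.

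For the converse, assume $P=G(P)$. Since $P$ is an \ts{sop}, Theorem~\ref{thm: main} shows that $G(P)$ is an \ts{omp}, and hence so is $P$. Alternatively, one can avoid Theorem~\ref{thm: main}: equality of the orders means that every $x\leq y$ lies in a Boolean subalgebra of $P$, and Corollary~\ref{cor: char OMP}, (2)$\Rightarrow$(1), then gives that $P$ is an \ts{omp}.

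No step is a real obstacle. The only point needing care is to state precisely what $P=G(P)$ means: equality as structures, which reduces to equality of the orders because everything else is shared by construction.
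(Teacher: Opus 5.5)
Your proof is correct and is essentially the paper's argument: the paper says only that the proposition is an immediate consequence of Corollary~\ref{cor: char OMP}, and your reduction to the coincidence of $\leq$ and $\sleq$ is exactly that deduction spelled out. Your alternative route for the converse via Theorem~\ref{thm: main} is also valid.
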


We next provide an example to show that the Main Construction is not well-behaved when applied to general \ts{op}s rather than \ts{sop}s. The assumption of the existence of binary orthogonal joins is necessary to obtain transitivity. 

\begin{ex}
Consider the \ts{op} shown below.     

\begin{center}
\begin{tikzpicture}[thick, font=\sffamily, x=1.5cm, y=1.2cm,
    dot/.style={circle, fill=black, inner sep=2pt}, xscale=.9, yscale=.9]

    \node[dot] (0) at (0,0) {};
    \node[below=4pt] at (0) {$0$};

    \node[dot] (x) at (-1.5, 1) {};
    \node[below left] at (x) {$x$};
    
    \node[dot] (u) at (-0.5, 1) {};
    \node[below left] at (u) {$u$};
    
    \node[dot] (v) at (0.5, 1) {};
    \node[below right] at (v) {$v$};
    
    \node[dot] (w) at (1.5, 1) {};
    \node[below right] at (w) {$w$};

    \node[dot] (y) at (-2.25, 2.5) {};
    \node[left=4pt] at (y) {$y$};
    
    \node[dot] (yprime) at (2.25, 2.5) {};
    \node[right=4pt] at (yprime) {$y'$};

    \node[dot] (wprime) at (-1.5, 4) {};
    \node[above left] at (wprime) {$w'$};
    
    \node[dot] (vprime) at (-0.5, 4) {};
    \node[above left] at (vprime) {$v'$};
    
    \node[dot] (uprime) at (0.5, 4) {};
    \node[above right] at (uprime) {$u'$};
    
    \node[dot] (xprime) at (1.5, 4) {};
    \node[above right] at (xprime) {$x'$};

    \node[dot] (1) at (0, 5) {};
    \node[above=4pt] at (1) {$1$};

    \draw (0) -- (x);
    \draw (0) -- (u);
    \draw (0) -- (v);
    \draw (0) -- (w);

    \draw (x) -- (y);
    \draw (x) -- (uprime);
    
    \draw (u) -- (y);
    \draw (u) -- (xprime);
    
    \draw (v) -- (wprime);
    \draw (v) -- (yprime);
    
    \draw (w) -- (vprime);
    \draw (w) -- (yprime);

    \draw (y) -- (wprime);
    \draw (y) -- (vprime);
    
    \draw (yprime) -- (uprime);
    \draw (yprime) -- (xprime);

    \draw (wprime) -- (1);
    \draw (vprime) -- (1);
    \draw (uprime) -- (1);
    \draw (xprime) -- (1);
\end{tikzpicture}
\end{center}
\vspace{1ex}

\noindent In this \ts{op} we have $x'\wedge y=u$ and $w'\wedge y'=v$. One can see that $x\sleq y$ and $y\sleq w'$. However we do not have $x\sleq w'$ since $w'\wedge x'$ does not exist, both $u,v$ are lower bounds of $w',v'$. This \ts{op} is not a \ts{sop}. 
\end{ex}

Finally, by Corollary~\ref{cor: OL}, if $L$ is an \ts{ol}, then $G(L)$ is an \ts{omp}. We might hope that it is further the case that $G(L)$ is an \ts{oml}. This is not the case. 

\begin{ex}
Consider the \ts{omp} $P$ known as the 4-loop, whose Greechie diagram consists of four 8-element Boolean algebras glued together as shown below. This \ts{omp} is not a lattice, the elements $a,e$ have both $c',g'$ as upper bounds, so they have no least upper bound, and similarly $c,g$ have no least upper bound. 
\vspace{1ex}

\begin{center}
\begin{tikzpicture}

  \coordinate (A) at (0,0);
  \coordinate (B) at (1,0);
  \coordinate (C) at (2,0);
  \coordinate (D) at (2,1);
  \coordinate (E) at (2,2);
  \coordinate (F) at (1,2);
  \coordinate (G) at (0,2);
  \coordinate (H) at (0,1);

  \draw (A) -- (C) -- (E) -- (G) -- cycle;
  
  \node[circle, fill, inner sep=1.5pt, label={below left:a}] at (A) {};
  \node[circle, fill, inner sep=1.5pt, label={below:b}] at (B) {};
  \node[circle, fill, inner sep=1.5pt, label={below right:c}] at (C) {};
  
  \node[circle, fill, inner sep=1.5pt, label={right:d}] at (D) {};
  
  \node[circle, fill, inner sep=1.5pt, label={above right:e}] at (E) {};
  \node[circle, fill, inner sep=1.5pt, label={above:f}] at (F) {};
  \node[circle, fill, inner sep=1.5pt, label={above left:g}] at (G) {};
  
  \node[circle, fill, inner sep=1.5pt, label={left:h}] at (H) {};

\end{tikzpicture}
\end{center}
\vspace{1ex}

\noindent Let $\overline{P}$ be the MacNeille completion of $P$. Then $\overline{P}$ is an \ts{ol}. It is formed by adding two new elements to $P$, one element $y$ that is the join of $a,e$ and the meet of $c',g'$, and its orthocomplement $y'$ that is the join of $c,g$ and the meet of $a',e'$. This is not an \ts{oml}. Since $\overline{P}$ is an \ts{ol} we can apply our construction to it and form the \ts{omp} $G(\overline{P})$. Since MacNeille completions preserve all existing joins and meets, the restriction of the ordering of $G(\overline{P})$ to its subset $P$ is the original ordering of $P$. One can check that in $G(\overline{P})$ there are no comparabilities involving $y$ or $y'$ other than the trivial ones with $0$ and $1$. For instance $a\not\leq y$ since in $\overline{P}$ we have $a'\wedge y = a'\wedge c'\wedge g'=0$. Thus $G(\overline{P})$ is the horizontal sum of $P$ and a 4-element Boolean algebra, and this is not a lattice. In this case, applying $G$ to $\overline{P}$ essentially removes the joins we attempted to include using the MacNeille completion.   
\end{ex}

\section{A categorical view}

\begin{defn}
A map $f:P\to Q$ between \ts{sop}s is an \ts{sop}-morphism if 
\vspace{1ex}

\begin{enumerate}
\item $f$ is order-preserving;
\item $f(x')=f(x)'$;
\item $x\perp y\Rightarrow f(x\vee y)=f(x)\vee f(y)$.
\end{enumerate}
\end{defn}

In this definition, if $x\perp y$, then $x\leq y'$, so $f(x)\leq f(y')=f(x)$. Thus an \ts{sop} morphism is an order-preserving map that preserves orthocomplementation and binary orthogonal joins. 

\begin{lem}
\label{lem: G(f)}
Let $P,Q$ be \ts{sop}s and $f:P\to Q$ be an \ts{sop}-morphism. Then the same set map $f$ considered as a map $f:G(P)\to G(Q)$ is an \ts{sop}-morphism.     
\end{lem}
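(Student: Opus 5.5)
The plan is to check the three clauses of an \ts{sop}-morphism for $f$ viewed as a map $G(P)\to G(Q)$. By Theorem~\ref{thm: main}, $G(P)$ and $G(Q)$ are \ts{omp}s, hence \ts{sop}s, so the statement makes sense. Preservation of orthocomplementation is immediate, because $G(P)$ and $G(Q)$ carry the same orthocomplementations as $P$ and $Q$. The real work is showing that $f$ is order-preserving for $\sleq$. Preservation of orthogonal joins will then follow from Claim~4 in the proof of Theorem~\ref{thm: main}.

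\emph{Order preservation.} Let $x\sleq y$ in $G(P)$. By Lemma~\ref{lem: tech}, $x\leq y$, $y=x\vee(x'\wedge y)$ and $x'=y'\vee(x'\wedge y)$.

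First I show that $f$ preserves the meet $x'\wedge y$. Since $x\leq y$, we have $x\perp y'$, so $x\vee y'$ exists and $f(x\vee y')=f(x)\vee f(y)'$. By De Morgan's law, $x'\wedge y=(x\vee y')'$. Hence
\[ f(x'\wedge y)=f(x\vee y')'=(f(x)\vee f(y)')'=f(x)'\wedge f(y), \]
where the last meet exists by De Morgan's law in $Q$.

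Next, both displayed joins from Lemma~\ref{lem: tech} are orthogonal joins: $x\perp x'\wedge y$, and $y'\perp x'\wedge y$. Applying clause (3) of the definition to each gives
\[ f(y)=f(x)\vee(f(x)'\wedge f(y)) \quad\text{and}\quad f(x)'=f(y)'\vee(f(x)'\wedge f(y)). \]
Also $f(x)\leq f(y)$, since $f$ is order-preserving for $\leq$. So Lemma~\ref{lem: tech}, direction (3)$\Rightarrow$(1), places $f(x),f(y)$ in a Boolean subalgebra of $Q$, which means $f(x)\sleq f(y)$. The step I expect to be the only delicate one is this transfer of the meet $x'\wedge y$ through $f$ via De Morgan's law, since $f$ is only assumed to preserve orthogonal joins.

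\emph{Orthogonal joins.} Suppose $x\sleq y'$ in $G(P)$. Then $x,y$ lie in a Boolean subalgebra of $P$, so by Claim~4 we have $x\sj y=x\vee y$. Moreover $x\perp y$ in $P$, so $f(x\vee y)=f(x)\vee f(y)$.

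By the order preservation just proved, $f(x)\sleq f(y)'$ in $G(Q)$. Hence $f(x),f(y)$ lie in a Boolean subalgebra of $Q$, and Claim~4 applied to $Q$ gives $f(x)\sj f(y)=f(x)\vee f(y)$. Combining these,
\[ f(x\sj y)=f(x\vee y)=f(x)\vee f(y)=f(x)\sj f(y), \]
which is clause (3) for $G(P)\to G(Q)$ and completes the proof.
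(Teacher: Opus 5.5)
Your proof is correct and follows the paper's argument essentially step for step. You push $x'\wedge y$ through $f$ via De Morgan's law, apply preservation of orthogonal joins to the two decompositions from Lemma~\ref{lem: tech} to get $f(x)\sleq f(y)$, and then use agreement of joins inside Boolean subalgebras to obtain clause (3). The only cosmetic difference is that you cite Claim~4 of Theorem~\ref{thm: main} directly, whereas the paper cites Proposition~\ref{prop: same Boolean subalgebras}, whose relevant half is exactly that claim.
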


\begin{proof}
We use the convention that $\leq$ represents an ordering in $P$ or $Q$, which one is clear from the context, and $\sleq$ represents an ordering in $G(P)$ or $G(Q)$. 
\vspace{2ex}

\noindent {\bf Claim:} If $x\sleq y$ in $G(P)$, then $f(x)\sleq f(y)$ in $G(Q)$.

\begin{proof}[Proof of claim]
Let $x\sleq y$. Then $x\leq y$ and $x,y$ lie in a Boolean subalgebra of $P$. So we have that $x\vee (x'\wedge y)=y$ and $y'\vee(x'\wedge y)=x'$. Since $x,y'$ are orthogonal in $P$ and $f$ is an \ts{sop}-morphism, we have $f(x),f(y)'$ are orthogonal in $Q$ and $f(x\vee y')=f(x)\vee f(y)'$ in $Q$. Thus $f(x'\wedge y)=f(x)'\wedge f(y)$. Then using again that $f$ preserves orthogonal joins between $P$ and $Q$ we have $f(x)\vee f(x'\wedge y)=f(y)$, hence $f(x)\vee(f(x)'\wedge f(y))=f(y)$, and similarly $f(y)'\vee(f(x)'\wedge f(y))=f(x)'$. So by Lemma~\ref{lem: tech} we have $f(x)$ and $f(y)$ belong to a Boolean subalgebra of $Q$, giving $f(x)\sleq f(y)$.  
\end{proof}

If $x,y$ are orthogonal in $G(P)$, then $x\sleq y'$, so $f(x)\sleq f(y)'$, and hence $f(x)$ and $f(y)$ are orthogonal in $G(Q)$. Using Proposition~\ref{prop: same Boolean subalgebras}, we have that join of $x,y$ in $G(P)$ is given by their join $x\vee y$ in $P$, and the join of $f(x),f(y)$ in $G(Q)$ is given by their join $f(x)\vee f(y)$ in $Q$. Since $f$ is an \ts{sop}-morphism and $x\vee y$ is an orthogonal join, we have $f(x\vee y)=f(x)\vee f(y)$, hence $f(x\sj y)=f(x\vee y)=f(x)\vee f(y)=f(x)\sj f(y)$. Thus $f:G(P)\to G(Q)$ is an \ts{sop}-morphism. 
\end{proof}


Clearly the identity map is an \ts{sop}-morphism and the composite of \ts{sop}-morphisms is again an \ts{sop}-morphism. So we may define  

\begin{defn}
Let \textsf{SOP} be the category of strong orthoposets and the \ts{sop}-morphisms between them and let its full subcategory consisting of orthomodular posets be  \textsf{OMP}.
\end{defn}

It follows from Theorem~\ref{thm: main} and Lemma~\ref{lem: G(f)} that there is a functor $G:\textsf{SOP}\to\textsf{OMP}$ taking an \ts{sop} $P$ to the \ts{oml} $G(P)$ provided by our main construction and taking an \ts{sop}-morphism $f:P\to Q$ to the \ts{sop}-morphism $G(f):G(P)\to G(Q)$ where $G(f)$ is the same set mapping as $f$. Further, by Proposition~\ref{prop: P OMP iff G(P)=P} $G$ restricts to the identity functor on \textsf{OMP}. 

\begin{thm}
\emph{\textsf{OMP}} is a full coreflective subcategory of \emph{\textsf{SOP}} with coreflector $G$.    
\end{thm}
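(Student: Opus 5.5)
To show that \textsf{OMP} is coreflective in \textsf{SOP} with coreflector $G$, we need, for each \ts{sop} $P$, a counit $\varepsilon_P:G(P)\to P$ in \textsf{SOP} with the usual universal property. Given any \ts{omp} $M$ and any \ts{sop}-morphism $f:M\to P$, there must be a unique \ts{sop}-morphism $\bar f:M\to G(P)$ with $\varepsilon_P\circ\bar f=f$. We take $\varepsilon_P$ to be the identity set map $P\to P$, and $\bar f$ to be the same set map as $f$. Fullness is by definition, since \textsf{OMP} was defined as a full subcategory.

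\textbf{Step 1: $\varepsilon_P$ is an \ts{sop}-morphism.} The map is order-preserving, because $x\sleq y$ implies $x\leq y$, and it preserves $'$ trivially. For orthogonal joins, suppose $x\perp y$ in $G(P)$, so $x\sleq y'$. Then $x,y'$ lie in a Boolean subalgebra of $P$, and Claim~4 in the proof of Theorem~\ref{thm: main} gives $x\sj y=x\vee y$.

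\textbf{Step 2: existence and uniqueness of $\bar f$.} Since $M$ is an \ts{omp}, Proposition~\ref{prop: P OMP iff G(P)=P} gives $G(M)=M$. By Lemma~\ref{lem: G(f)}, the same set map $f$ is an \ts{sop}-morphism $G(M)=M\to G(P)$; call it $\bar f$. Then $\varepsilon_P\circ\bar f=f$ as set maps, hence as morphisms. Uniqueness holds because $\varepsilon_P$ is injective on underlying sets: any $g$ with $\varepsilon_P\circ g=f$ must equal $f$ as a set map.

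\textbf{Step 3: functoriality.} One checks that the counit is natural, $f\circ\varepsilon_P=\varepsilon_Q\circ G(f)$. This is immediate, since all these maps are the same set maps.

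There is no real obstacle here. The only substantive inputs are Lemma~\ref{lem: G(f)} and Proposition~\ref{prop: P OMP iff G(P)=P}, both already established, together with Claim~4 for Step 1.
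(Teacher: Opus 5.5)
Your proposal is correct and matches the paper's proof. The counit $\varepsilon_P$ is the identity map, and it is an \ts{sop}-morphism because orthogonal joins in $G(P)$ agree with those in $P$; you cite Claim~4 directly where the paper cites Proposition~\ref{prop: same Boolean subalgebras}. The factorization is the same set map $f$, which is an \ts{sop}-morphism $M=G(M)\to G(P)$ by Lemma~\ref{lem: G(f)} and Proposition~\ref{prop: P OMP iff G(P)=P}, and it is unique because $\varepsilon_P$ is injective. Your Step~3 naturality check confirms that the right adjoint induced by these universal arrows acts on morphisms as $G$, which the paper leaves implicit. You also quantify the counit over all \ts{sop}s, whereas the paper's text says ``each \ts{omp} $Q$'', evidently a slip.
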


\begin{proof}
We must show that the functor $G:\textsf{SOP}\to\textsf{OMP}$ is right adjoint to the inclusion functor $\iota:\textsf{OMP}\hookrightarrow\textsf{SOP}$. For this, it is enough to show that for each \ts{omp} $Q$, there is an \ts{sop}-morphism $\varepsilon_Q:G(Q)\to Q$ such that for any \ts{omp} $P$ and any \ts{sop}-morphism $f:P\to Q$, there is a unique \ts{sop}-morphism $f':P\to G(Q)$ with $\varepsilon_Q\circ f'=f$. 

\begin{center}
\begin{tikzpicture}[node distance=3cm, auto, thick, >={Stealth[length=3mm]}]
    
    \node (P) at (0, 2.5) {$P$};
    \node (Q) at (3.5, 2.5) {$Q$};
    \node (GQ) at (3.5, 0) {$G(Q)$};

    \draw[->] (P) -- (Q) node[midway, above] {$f$};

    \draw[->, dashed] (P) -- (GQ) node[midway, below left] {$f'$};

    \draw[->] (GQ) -- (Q) node[midway, right] {$\varepsilon_Q$};
\end{tikzpicture}
\end{center}
\vspace{1ex}

\noindent Let $\varepsilon_Q$ be the identity map on $Q$. Clearly $\varepsilon_Q$ preserves orthocomplementation and order. If $u,v$ are orthogonal in $G(Q)$, then they belong to a Boolean subalgebra $B$ of $Q$ and by Proposition~\ref{prop: same Boolean subalgebras} the join of $u,v$ in $G(Q)$ agrees with their join in $Q$. This shows that $\varepsilon_Q$ is an \ts{sop}-morphism. Since $P$ is an \ts{omp} we have by Proposition~\ref{prop: P OMP iff G(P)=P} that $G(P)=P$. Since $G$ is a functor, $G(f):G(P)\to G(Q)$ is an \ts{sop}-morphism. So the set mapping $f$ is an \ts{sop}-morphism from $P\to G(Q)$ and is the unique one whose composite with $\varepsilon_Q$ is $f$.
\end{proof}

There is a somewhat different direction that we can consider our construction. Let \textsf{OL} be the category of ortholattices and ortholattice homomorphisms between them. This is a non-full subcategory of \textsf{SOP}, so $G$ restricts to a function $G:\textsf{OL}\to\textsf{OMP}$. 

\begin{thm}
The functor $G:\emph{\textsf{OL}}\to\emph{\textsf{OMP}}$ is right adjoint. 
\end{thm}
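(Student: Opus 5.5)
The plan is to construct a left adjoint $F:\textsf{OMP}\to\textsf{OL}$ by generators and relations. It suffices to give, for each \ts{omp} $P$, an ortholattice $F(P)$ and an \ts{sop}-morphism $\eta_P:P\to G(F(P))$ that is a universal arrow from $P$ to $G$. That is, for every ortholattice $L$ and every \ts{sop}-morphism $g:P\to G(L)$ there should be a unique ortholattice homomorphism $h:F(P)\to L$ with $G(h)\circ\eta_P=g$. Since $G(h)$ is the same set map as $h$, this says $h\circ\eta_P=g$ as set maps. A universal arrow for each object then yields the left adjoint by the standard pointwise construction, with naturality automatic.

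For the construction, ortholattices form a variety (bounded lattices with an order-inverting involutive complementation, axiomatized by equations). So let $F(P)$ be the ortholattice presented by generators $[x]$ for $x\in P$ subject to these relations:
\begin{itemize}
\item $[0]=0$ and $[1]=1$;
\item $[x']=[x]'$ for all $x\in P$;
\item $[x\vee y]=[x]\vee[y]$ whenever $x\perp y$ in $P$.
\end{itemize}
Let $\eta_P(x)=[x]$. Uniqueness of $h$ is immediate, since $F(P)$ is generated by $\eta_P(P)$.

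For existence, let $g:P\to G(L)$ be an \ts{sop}-morphism. I check that $g$, viewed as a map into $L$, respects the relations. It preserves $0$, $1$ and $'$, because $0\sleq x$ forces $g(0)\sleq g(x)$ for all $x$, so $g(0)$ is the bottom element. For $x\perp y$ in $P$, $g(x\vee y)$ is the join $g(x)\sj g(y)$ in $G(L)$. Here $g(x)\sleq g(y)'$, so $g(x),g(y)$ lie in a Boolean subalgebra of $L$, and by Claim~4 of Theorem~\ref{thm: main} (or Proposition~\ref{prop: same Boolean subalgebras}) this join equals $g(x)\vee g(y)$ in $L$. Hence $g$ induces an ortholattice homomorphism $h:F(P)\to L$ with $h([x])=g(x)$.

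The main point is to verify that $\eta_P$ is an \ts{sop}-morphism $P\to G(F(P))$, and in particular that it is order-preserving for $\sleq$. Take $x\le y$ in $P$. Since $P$ is an \ts{omp}, $x\perp y'$ and $y=x\vee(x'\wedge y)$, where $x'\wedge y=(x\vee y')'$ is an orthogonal join complemented. Similarly $x'=y'\vee(x'\wedge y)$, since $y'\le x'$ and orthomodularity applies. Applying the relations gives, in $F(P)$,
\[
[x'\wedge y]=[x]'\wedge[y],\qquad [y]=[x]\vee([x]'\wedge[y]),\qquad [x]'=[y]'\vee([x]'\wedge[y]).
\]
These equations also give $[x]\le[y]$. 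Lemma~\ref{lem: tech}(3) then places $[x],[y]$ in a Boolean subalgebra of $F(P)$, so $[x]\sleq[y]$. The map $\eta_P$ preserves $'$ by the relations. For $x\perp y$ in $P$, we get $[x]\sleq[y]'$, so by Claim~4 the join $[x]\sj[y]$ in $G(F(P))$ equals $[x]\vee[y]=[x\vee y]$. Thus $\eta_P$ is an \ts{sop}-morphism, and the universal property holds, so $G$ restricted to \textsf{OL} has left adjoint $F$.
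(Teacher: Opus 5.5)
Your argument is correct, and it takes a genuinely different route from the paper's.

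The paper uses the adjoint functor theorem. It notes that \textsf{OL} is complete because it is a variety, checks that $G$ preserves products and equalizers, and asserts a solution set condition. You instead construct the left adjoint explicitly. You take $F(P)$ to be the ortholattice freely generated by $P$ subject to preserving $'$ and orthogonal joins. The only substantive check is that the unit $\eta_P$ is monotone for $\sleq$. You get this by pushing the two orthomodular identities $y=x\vee(x'\wedge y)$ and $x'=y'\vee(x'\wedge y)$ through the relations into $F(P)$, and then applying Lemma~\ref{lem: tech}(3) there.

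Your route gives an explicit description of the left adjoint and its unit. It also shows exactly where orthomodularity of $P$ is used: it is what makes $\eta_P$ a morphism into $G(F(P))$. Finally, it avoids the solution set condition entirely. The paper handles that step only in passing, and its sentence about $\iota:P\to G(P)$ does not actually exhibit a solution set, since $P$ need not be an ortholattice. A genuine solution set comes from factoring $g:P\to G(L)$ through $G(S)$, where $S$ is the sub-ortholattice generated by $g(P)$. Its cardinality is bounded in terms of $|P|$, and one checks via Lemma~\ref{lem: tech}(3) and Claim~4 that $\sleq$ and orthogonal joins restrict to $G(S)$. The paper's route, in exchange, needs only limit-preservation facts about $G$ and no presentation machinery, but it says nothing about what the left adjoint looks like.

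One small repair is needed. Your justification that $g(0)=0$ only shows that $g(0)$ lies below every element of $g(P)$. To finish, take $x=1=0'$. This gives $g(0)\le g(0)'$ in $L$, hence $g(0)=g(0)\wedge g(0)'=0$.
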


\begin{proof}
We use the adjoint functor theorem. Since \textsf{OL} is a variety, it is complete as a category. To see that $G$ preserves small limits, it is enough to see the following. 
\vspace{2ex}

\noindent {\bf Claim:} $G$ preserves products and equalizers. 

\begin{proof}[Proof of claim]
Suppose $L=\prod_IL_i$. As sets, $G(L)=\prod_IG(L_i)$ and orthocomplementation on these agree. For choice functions $(x_i)$ and $(y_i)$ in $L$, we have $(x_i)\sleq(y_i)$ iff $(x_i)\leq(y_i)$ and $(x_i),(y_i)$ generate a Boolean subalgebra. But this occurs iff $x_i\leq y_i$ and $x_i,y_i$ generate a Boolean subalgebra for each $i\in I$, which occurs iff $x_i\sleq y_i$ for each $i\in I$. So $G$ preserves products. Suppose $f,g:L\to M$ are ortholattice homomorphisms. Their equalizer is the subalgebra $S=\{x\mid f(x)=g(x)\}$ of $L$. It is easy to see that the ordering of $G(S)$ is that inherited from $G(L)$, so $G(S)$ is the equalizer of $G(f),G(g):G(L)\to G(M)$.
\end{proof}

Finally, $G$ trivially satisfies the solution set condition. If $P$ is an \ts{omp}, then the idenity map $\iota:P\to G(P)$ is an isomorphism. For any \ts{sop} $Q$ and \ts{omp}-morphism $f:P\to G(Q)$, clearly $f$ is itself a map from $G(P)\to Q$. So the solution set condition is satisfied, hence by the adjoint functor theorem, $G$ is a right adjoint functor. 
\end{proof}


\begin{thebibliography}{99}
\setlength{\parskip}{0em}

\bibitem{Kalmbach}
G.~Kalmbach, \textit{Orthomodular Lattices}, London Mathematical Society Monographs, Vol.~18, Academic Press, London, 1983.

\bibitem{PtakPulmannova}
P.~Pt\'ak and S.~Pulmannov\'a, \textit{Orthomodular Structures as Quantum Logics: Intrinsic Properties, State Space and Probabilistic Topics}, Fundamental Theories of Physics, Vol.~44, Kluwer Academic Publishers, Dordrecht, 1991.


\end{thebibliography}
\end{document}